\theoremstyle{plain}
\newtheorem{lemma}{Lemma}[section]
\newtheorem{theorem}[lemma]{Theorem}
\newtheorem{cor}[lemma]{Corollary}
\newtheorem{prop}[lemma]{Proposition}
\newtheorem{exam}[lemma]{\normalfont \scshape
 Example}
\newtheorem{rem}[lemma]{\normalfont \scshape Remark}
\newcommand{\R}{\mathbb{R}}
\newcommand{\N}{\mathbb{N}}
\newcommand{\norm}[1]{\left\Vert#1\right\Vert}
\newcommand{\abs}[1]{\left\vert#1\right\vert}
\newcommand{\set}[1]{\left\{#1\right\}}
\newcommand{\bfx}{\bm{x}}
\newcommand{\bfzero}{\bm{0}}
\newcommand{\bfone}{\bm{1}}
\newcommand{\bfe}{\bm{e}}
\newcommand{\bft}{\bm{t}}
\newcommand{\bfU}{\bm{U}}
\newcommand{\bfu}{\bm{u}}
\newcommand{\bfZ}{\bm{Z}}
\newcommand{\bfz}{\bm{z}}
\newcommand{\bfeta}{\bm{\eta}}
\begin{document}

\title[Idempotent $D$-Norms]{On Idempotent $\boldsymbol{D}$-Norms}%
\author{Michael Falk}
\address{University of W\"{u}rzburg, Institute of Mathematics,  Emil-Fischer-Str. 30, 97074 W\"{u}rzburg, Germany.}
\email{michael.falk@uni-wuerzburg.de}

\subjclass[2010]{Primary 60G70, secondary 60E99}
\keywords{Multivariate extreme value theory, max-stable distributions, max-stable random vectors, $D$-norm, generator of $D$-norm, idempotent $D$-norm, $D$-norm track}


\begin{abstract}
Replacing the spectral measure by  a random vector $\bfZ$ allows the representation of a max-stable distribution on $\R^d$ with standard negative margins via a norm, called \emph{$D$-norm}, whose generator is $\bfZ$.  The set of $D$-norms can be equipped with a commutative multiplication type operation, making it a semigroup with an identity element. This multiplication leads to idempotent $D$-norms. We characterize the set of idempotent $D$-norms.  Iterating the multiplication provides a track of $D$-norms, whose limit exists and is again a $D$-norm. If this iteration is repeatedly done on the same $D$-norm, then the limit of the track is idempotent.
\end{abstract}

\maketitle

\section{Introduction}\label{sec:Introduction}

A $D$-norm $\norm\cdot_D$ on $\R^d$ is defined via a random vector (rv) $\bfZ=(Z_1,\dots,Z_d)$ as follows. It is required that $Z_i\ge 0$ a.s. and  $E(Z_i)=1$, $1\le i\le d$. The $D$-norm corresponding to $\bfZ$ is then defined by
\[
\norm{\bfx}_D:=E\left(\max_{1\le i\le d}\left(\abs{x_i}Z_i\right)\right),\qquad \bfx\in\R^d,
\]
and $\bfZ$ is called \emph{generator} of $\norm\cdot_D$.

If we take for example $Z_i=1$, $1\le i\le d$, then we obtain
\[
\norm{\bfx}_D=\norm{\bfx}_\infty:=\max_{1\le i\le d}\abs{x_i}.
\]
If $\bfZ$ is a random permutation of the vector $(d,0,\dots,0)\in\R^d$ with equal probabilities, then we obtain the $L_1$-norm
\[
\norm{\bfx}_D=\norm{\bfx}_1:=\sum_{i=1}^d\abs{x_i},\qquad \bfx\in\R^d,
\]
 These are the two extreme cases of a $D$-norm and we obviously have
\begin{equation*}
\norm\cdot_\infty\le\norm\cdot_D\le\norm\cdot_1
\end{equation*}
for each $D$-norm $\norm\cdot_D$.

Let $X_1,\dots,X_d$ be independent and identically Fr\'{e}chet-distributed rv, i.e., $P(X_i\le x)=$ $\exp(-x^{-\lambda})$, $x>0$, with parameter $\lambda>1$. Denote by $\Gamma(\cdot)$ the usual Gamma function and note that $E(X_i)=\Gamma(1-\lambda^{-1})$. Then $\bfZ=(Z_1,\dots,Z_d)$ with
    \begin{equation*}\label{eq:frechet_generator}
    Z_i:=\frac{X_i}{\Gamma(1-\frac 1\lambda)},\quad i=1,\dots,d,
    \end{equation*}
    generates  the logistic norm, i.e.,
\[
E\left(\max_{1\le i\le d}(\abs{x_i}Z_i)\right)= \norm{\bfx}_{\lambda}=\left(\sum_{i=1}^d\abs{x_i}^\lambda\right)^{1/\lambda},\qquad \bfx\in\R^d.
\]

\begin{rem}\label{rem:generation_of_sms_rv}\upshape
The theory of $D$-norms is an offspring of multivariate extreme value theory, as we illustrate in what follows.

The rv $\bfeta=(\eta_1,\dots,\eta_d)$ is called \emph{standard max-stable} (sms) if each component follows the standard negative exponential distribution, i.e., $P(\eta_i\le x)=\exp(x)$, $x\le 0$, $1\le i\le d$, and if for each $n\in\N$
\[
P\left(n\max_{1\le i\le n} \bfeta^{(i)}\le \bfx\right) = P\left(\bfeta\le\frac{\bfx}n\right)^n = P\left(\bfeta\le\bfx\right),\qquad \bfx\le\bfzero\in\R^d,
\]
where $\bfeta^{(1)},\bfeta^{(2)},\dots$ are independent copies of $\bfeta$. All operations on vectors such as $\max$ or $\le$ are meant componentwise.

The distribution function (df) $G(\bfx):=P(\bfeta\le\bfx)$, $\bfx\in\R^d$, of a sms rv $\bfeta$ is called \emph{standard max-stable} as well. The well-known de Haan-Resnick-Pickands-Vatan representation of a sms df, see, e.g., \citet[Sections 4.2, 4.3]{fahure10}, can now be formulated in quite an elegant way via $D$-norms.

\begin{theorem}[Pickands, de Haan-Resnick, Vatan]
A function $G:(-\infty,0]^d\to[0,1]$ is a sms df $\iff$ there exists a \emph{$D$-norm} $\norm\cdot_D$ on $\R^d$ such that
\[
G(\bfx)=\exp\left(-\norm{\bfx}_D\right),\qquad \bfx\le\bfzero\in\R^d.
\]
\end{theorem}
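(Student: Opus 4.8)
The plan is to handle the two implications separately: ``$\Leftarrow$'' by an explicit construction of an sms rv from a given $D$-norm, and ``$\Rightarrow$'' by rewriting the spectral (Pickands) integral that the cited de Haan--Resnick--Pickands representation furnishes as an expectation.

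For ``$\Leftarrow$'', recall that a $D$-norm has the form $\norm{\bfx}_D=E\left(\max_{1\le i\le d}\abs{x_i}Z_i\right)$ for a \emph{generator} $\bfZ=(Z_1,\dots,Z_d)$ with $Z_i\ge 0$ and $E(Z_i)=1$. I would realize the candidate df directly via the de Haan point-process construction. Let $\sum_{k\ge 1}\delta_{U_k}$ be a Poisson point process on $(0,\infty)$ with Lebesgue intensity, let $\bfZ^{(1)},\bfZ^{(2)},\dots$ be independent copies of $\bfZ$ independent of that process, and set $\eta_i:=-\min_{k\ge1}(U_k/Z_i^{(k)})$; one first checks that each $\eta_i$ is a.s.\ finite and negative, using $E(Z_i)=1>0$. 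By the marking theorem the pairs $(U_k,\bfZ^{(k)})$ form a Poisson process on $(0,\infty)\times[0,\infty)^d$ with intensity $du\otimes P_{\bfZ}$, and for $\bfx\le\bfzero$ the event $\set{\bfeta\le\bfx}$ is exactly the event that this process has no point in $\set{(u,\bfz):u<\max_i\abs{x_i}z_i}$. The mass of that set is $\int\max_i\abs{x_i}z_i\,P_{\bfZ}(d\bfz)=\norm{\bfx}_D$, so the void probability gives $P(\bfeta\le\bfx)=\exp\left(-\norm{\bfx}_D\right)=G(\bfx)$. Standard negative exponential margins then follow by putting all but one coordinate equal to $0$ and using $\norm{\bfe_i}_D=E(Z_i)=1$, and max-stability follows at once from positive homogeneity, $n\norm{\bfx/n}_D=\norm{\bfx}_D$.

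For ``$\Rightarrow$'', let $G$ be an sms df. I would invoke the de Haan--Resnick--Pickands representation \citet[Sections~4.2,~4.3]{fahure10}: in the standard negative exponential parametrization there is a finite measure $\nu$ on the unit simplex $S=\set{\bfw\ge\bfzero:\sum_{i=1}^dw_i=1}$ with $\int_S w_i\,\nu(d\bfw)=1$ for every $i$ (the normalization forced by the margins) such that
\[
G(\bfx)=\exp\left(-\int_S\max_{1\le i\le d}\abs{x_i}w_i\,\nu(d\bfw)\right),\qquad\bfx\le\bfzero.
\]
Setting $m:=\nu(S)$, letting $\bfW$ have law $\nu/m$ on $S$, and putting $\bfZ:=m\bfW$, one gets $Z_i\ge0$ and $E(Z_i)=m\int_S w_i\,(\nu/m)(d\bfw)=1$, and $E\left(\max_i\abs{x_i}Z_i\right)=\int_S\max_i\abs{x_i}w_i\,\nu(d\bfw)$ reproduces $-\log G(\bfx)$ on $\bfx\le\bfzero$. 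It then remains to verify that $\norm{\bfx}_D:=E\left(\max_i\abs{x_i}Z_i\right)$, now defined for all $\bfx\in\R^d$, is a norm: positive homogeneity is immediate, the triangle inequality follows by taking expectations in $\max_i\abs{x_i+y_i}Z_i\le\max_i\abs{x_i}Z_i+\max_i\abs{y_i}Z_i$, and the sandwich $\norm{\bfx}_\infty=\max_i\abs{x_i}E(Z_i)\le\norm{\bfx}_D\le\sum_i\abs{x_i}E(Z_i)=\norm{\bfx}_1$ shows it is finite, positive definite, and continuous.

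The only genuinely non-routine ingredient is the cited spectral representation itself --- the fact that an arbitrary sms df can be written against a finite measure on the simplex with the stated margin normalization; everything afterwards (passing to a generator, and checking the norm axioms) is the elementary bookkeeping sketched above. A minor but worthwhile point to state carefully is the sandwich $\norm{\bfx}_\infty\le\norm{\bfx}_D\le\norm{\bfx}_1$, since it is exactly what simultaneously legitimizes extending $-\log G$ from $(-\infty,0]^d$ to all of $\R^d$ and yields definiteness of $\norm{\cdot}_D$.
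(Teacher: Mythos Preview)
The paper does not actually prove this theorem; it is stated as a known consequence of the de Haan--Resnick--Pickands spectral representation, with a citation to \citet[Sections~4.2,~4.3]{fahure10}, and no argument is given in the paper itself. Your proposal is a correct and complete elaboration of precisely that route: the ``$\Rightarrow$'' direction is exactly the passage from spectral measure to generator that the paper alludes to (and later summarizes in Remark~\ref{rem:characterization_of_generators}), and your ``$\Leftarrow$'' direction via a marked Poisson process is the construction the paper records, in an equivalent parametrization, as Remark~\ref{rem:generation_of_sms_rv}. So your approach is essentially the one the paper points to, just spelled out rather than cited.

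One small remark on presentation: in the paper the generator in the \emph{definition} of a $D$-norm is assumed bounded (condition~\eqref{cond:first_condition_on_generator}); the extension to unbounded generators is only justified later in Lemma~\ref{lem:generator:dropping_the_boundedness}. Your ``$\Rightarrow$'' construction produces a generator $\bfZ=m\bfW$ with $\bfW$ on the simplex, hence automatically bounded by $m=\nu(S)\le d$, so no issue arises there; but it is worth being explicit that you stay within the bounded-generator framework, since that is what the theorem as stated requires.
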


Each sms rv $\bfeta$ can be generated in the following way. Consider a Poisson point process on $[0,\infty)$ with mean measure $r^{-2}\,dr$. Let $V_i$, $i\in\N$, be a realization of this point process, i.e., we can choose $V_i=1/\sum_{k=1}^i E_k$, where $E_1,E_2,\dots$ are independent and identically standard exponential distributed rv. Consider independent copies $\bfZ_1,\bfZ_2,\dots$ of a generator $\bfZ$ of the $D$-norm corresponding to $\bfeta$, which are also independent of the Poisson process. Then we have
\[
\bfeta=_D-\frac 1{\sup_{i\in\N} V_i\bfZ_i},
\]
which is a consequence of \citet[Lemma 9.4.7]{dehaf06} and elementary computations.
\end{rem}

The copula of an arbitrary sms df $G(\bfx)=\exp\left(-\norm{\bfx}_D\right)$, $\bfx\le\bfzero\in\R^d$, is given by
\[
C(\bfu)=G(\log(\bfu)) =\exp\left(-\norm{\log(\bfu)}_D\right),\qquad \bfu\in (0,1]^d.
\]
As each multivariate max-stable df can be obtained from a sms df by just transforming the margins (see, e.g., \citet[Lemma 5.6.8]{fahure10}), the copula of \emph{each} multivariate extreme value distribution is of the preceding form.

We have, moreover, by Taylor expansion of $\log(\cdot)$ and $\exp(\cdot)$ for $\bfx\ge\bfzero\in\R^d$
\[
\lim_{t\downarrow 0} \frac{1-C(1-t\bfx)} t = \lim_{t\downarrow 0} \frac{1- \exp\left(-\norm{\log(1-t\bfx)}_D\right)} t = \norm{\bfx}_D,
\]
and, thus, $\norm{\bfx}_D=:\lambda(\bfx)$ is the \emph{stable tail dependence function} introduced by \citet{huang92}.

The function
\[
D(\bft):= \norm{\left(t_1,\dots,t_{d-1},1-\sum_{i=1}^{d-1} t_i\right)}_D,
\]
defined on $\set{\bft\in[0,1]^{d-1}:\,\sum_{i=1}^{d-1}t_i\le 1}$ is known as \emph{Pickands dependence function}, and we have
\[
\norm{\bfx}_D =\norm{\bfx}_1 D\left(\frac{\abs{x_1}}{\norm{\bfx}_1},\dots, \frac{\abs{ x_{d-1}}}{\norm{\bfx}_1}\right),\qquad \bfx\in\R^d,
\]
which offers a different way to represent a sms df; see \citet[Section 4.3]{fahure10}.

\begin{rem}\upshape
The generator of a $D$-norm is not uniquely determined, even its distribution is not. Take again the $D$-norm $\norm\cdot_\infty$, which is generated by the constant rv $\bfZ=(1,\dots,1)$. But $\norm\cdot_\infty$ is generated by \emph{any} rv $(\xi,\dots,\xi)$, where $\xi\ge 0$ a.s. is a random variable with $E(\xi)=1$:
\[
E\left(\max_{1\le i\le d}\left(\abs{x_i}\xi\right)\right)=\left(\max_{1\le i\le d}\abs{x_i}\right) E(\xi)=\norm{\bfx}_\infty,\qquad \bfx\in\R^d.
\]

While the equation
\[
\norm{\bfx}_D =E\left(\max(\abs{x_1}Z_1,\dots,\abs{x_d}Z_d)\right),\qquad \bfx\in\R^d,
\]
does not uniquely determine the distribution of the generator $\bfZ$, the function
\[
\varphi(\bfx):= E\left(\max(1, x_1Z_1,\dots, x_dZ_d)\right),\qquad \bfx>\bfzero\in\R^d,
\]
actually does\footnote{Pointed out by Professor Chen Zhou during the workshop on \emph{Extreme Value Theory}, November 3-5, 2014, Besan\c{c}on, France}. This can easily be seen by using  $E\left(\max(1,x_1Z_1,\dots,x_dZ_d)\right)=\int_0^\infty 1- P(\max(1,x_1Z_1,\dots,x_dZ_d)\le t)\,dt$.

\citet[Theorem 3.1]{wansto10a} established the fact that the norms generated by $\bfZ^{(1)}$, $\bfZ^{(2)}$ coincide if and only if
\[
E\left(\abs{\sum_{i=1}^d x_i Z_i^{(1)}}\right) = E\left(\abs{\sum_{i=1}^d x_i Z_i^{(2)}}\right),\qquad \bfx\in\R^d.
\]
\citet{molss14} explored further implications of the above equivalence, called zonoid equivalence, linked to stochastic geometry.
\end{rem}

\begin{rem}\label{rem:characterization_of_generators}\upshape
Let $\norm\cdot$ be an arbitrary norm on $\R^d$. The proof of the de Haan-Resnick-Pickands-Vatan representation of a sms df (see, e.g. \citet[Sections 4.2, 4.3]{fahure10}), shows that for each $D$-norm there exists a generator $\bfZ=(Z_1,\dots,Z_d)$ with the additional property $\norm{\bfZ}=\mathrm{const}$ a.s. The distribution of this generator is uniquely determined.

If we choose in particular $\norm\cdot=\norm\cdot_1$, then $\norm{\bfZ}=\sum_{i=1}^d Z_i=\mathrm{const}$ a.s., which, together with $E\left(\sum_{i=1}^d Z_i\right)=d$ implies $\mathrm{const}=d$. As a consequence we, thus, obtain in particular that each $D$-norm has a generator $\bfZ$ with the additional property $\sum_{i=1}^d Z_i=d$. This will in particular be useful in the derivation of Proposition \ref{prop:idempotent_D-norms, bivariate_case}.

By considering only generators with the additional assumption that their components sum up to $d$, one can equip the set of $D$-norms on $\R^d$ with a Wasserstein-metric, such that it becomes a complete separable metric space, see \citet{aulfazo14}.
\end{rem}

\begin{rem}\upshape
The set of $D$-norms is closely related to the set of copulas. Let the rv $\bfU=(U_1,\dots,U_d)$ follow an arbitrary copula $C$ on $\R^d$, i.e., each component $U_i$ is on $(0,1)$ uniformly distributed. Then
\[
\bfZ:=2 \bfU
\]
is, obviously, the generator of a $D$-norm. Note, however, that not each $D$-norm can be generated this way. Take, for example, the bivariate independence $D$-norm $\norm{(x,y)}_1=\abs x+\abs y$ and suppose that there exists a rv $(U_1,U_2)$ following a copula such that
\[
\norm{(x,y)}_1 =2 E\left(\max\left(\abs x U_1,\abs y U_2\right)\right),\qquad (x,y)\in\R^2.
\]
Choose $x=y=1$. From the general equation
\begin{equation}\label{eqn:representation_of_max(a,b)}
\max(a,b)=\frac{a+b}2+\frac{\abs{a-b}}2,\qquad a,b\in\R,
\end{equation}
we obtain
\begin{align*}
2 &= 2 E\left(\frac{U_1+U_2} 2 + \frac{\abs{U_1-U_2}} 2\right) = 1+ E\left(\abs{U_1-U_2}\right)\\
&\iff E\left(\abs{U_1-U_2}\right) = 1\\
&\iff \abs{U_1-U_2}=1\quad \mathrm{a.s.}
\end{align*}
But as $U_1,U_2$ realize in $(0,1)$ a.s., we have $\abs{U_1-U_2}<1$ a.s. and, thus, a contradiction. The bivariate $D$-norm $\norm\cdot_1$, therefore, cannot be generated by $2(U_1,U_2)$. It is obvious that $\norm\cdot_\infty$ on $\R^d$ with $d\ge 3$ cannot be generated by $2\bfU$, as $\norm{\bfone}_1=d>2E\left(\norm{\bfU}_\infty\right)$.
\end{rem}

Based on the componentwise multiplication of their generators, we introduce in Section \ref{sec:multiplication_of_D-norms} a multiplication operation on the set of $D$-norms, which makes this set a commutative semigroup with an identity element. This leads to \emph{idempotent} $D$-norms, which are characterized in Section \ref{sec:idempotent_D-norms}. Iterating the multiplication provides a \emph{track} of $D$-norms. We will establish in Section \ref{sec:tracks_of_D-norms} the fact that the limit of a $D$-norm track is an idempotent $D$-norm, if the multiplication is repeatedly done with the same $D$-norm. An application to copulas is given in Corollary \ref{cor:limit_of_copula_track}.

The $D$-norm approach can be extended to functional extreme value theory, see \citet{aulfaho11}. In the present paper, however, we restrict ourself to the finite dimensional space.

\section{Multiplication of $D$-Norms}\label{sec:multiplication_of_D-norms}

Our approach towards sms df enables the following multiplication-type operation on $D$-norms. Choose two generators $\bfZ^{(1)}, \bfZ^{(2)}$ with corresponding $D$-norms $\norm\cdot_{D^{(1)}}$, $\norm\cdot_{D^{(2)}}$  and suppose that $\bfZ^{(1)}$, $\bfZ^{(2)}$ are independent. Then
\[
\bfZ:=\bfZ^{(1)}\bfZ^{(2)}
\]
is again a generator of a $D$-norm, which we denote by $\norm\cdot_{D^{(1)}D^{(2)}}$. Recall that all operations on vectors, such as the above multiplication, is meant componentwise. Clearly, the multiplication is commutative  $\norm\cdot_{D^{(1)}D^{(2)}}=\norm\cdot_{D^{(2)}D^{(1)}}$. The $D$-norm $\norm\cdot_{D^{(1)}D^{(2)}}$ does not depend on the particular choice of generators,  as follows from conditioning, see below.

\begin{rem}\label{rem:multiplication_of_generators_and_eta}\upshape Take two independent generators $\bfZ^{(1)}$, $\bfZ^{(2)}$ of two two $D$-norms $\norm{\cdot}_{D^{(1)}}$, $\norm{\cdot}_{D^{(2)}}$ on $\R^d$. Let $\bfZ_i^{(k)}$, $i\in\N$, be independent copies of $\bfZ^{(k)}$, $k=1,2$, being mutually independent as well. If $V_i$, $i\in\N$, are the points of an independent Poisson process with mean measure $r^{-2}\,dr$, then the rv
\[
\bfeta:=-\frac 1{\sup_{i\in\N}V_i\bfZ_i^{(1)}\bfZ_i^{(2)}}
\]
is sms with
\[
P(\bfeta\le\bfx)= \exp\left(-\norm{\bfx}_{D^{(1)}D^{(2)}}\right),\qquad \bfx\le\bfzero\in\R^d,
\]
see Remark \ref{rem:generation_of_sms_rv}.
\end{rem}

Let, for instance, $\bfZ^{(2)}$ be a generator of the $D$-norm $\norm\cdot_\infty$. Then we obtain by conditioning on $\bfZ^{(1)}$
\begin{align}\label{eqn:conditioning_technique}
\norm{\bfx}_{D^{(1)}D^{(2)}}&= E\left(\norm{\bfx\bfZ^{(1)}\bfZ^{(2)}}_\infty\right)\nonumber\\
&= \int E\left(\norm{\bfx\bfz^{(1)}\bfZ^{(2)}}_\infty\mid \bfZ^{(1)}=\bfz^{(1)}\right) \left(P*\bfZ^{(1)}\right)\left(d\bfz^{(1)}\right)\nonumber\\
&= \int E\left(\norm{\bfx\bfz^{(1)}\bfZ^{(2)}}_\infty\right) \left(P*\bfZ^{(1)}\right)\left(d\bfz^{(1)}\right)\nonumber\\
&= \int \norm{\bfx\bfz^{(1)}}_\infty \left(P*\bfZ^{(1)}\right)\left(d\bfz^{(1)}\right)\nonumber\\
&= E\left(\norm{\bfx\bfZ^{(1)}}_\infty\right)\nonumber\\
&= \norm{\bfx}_{D^{(1)}},\qquad \bfx\in\R^d,
\end{align}
i.e., $\norm{\cdot}_{D^{(1)}D^{(2)}}= \norm{\cdot}_{D^{(1)}}$.  The sup-norm $\norm\cdot_\infty$ is, therefore, the identity element within the set of $D$-norms, equipped with the above multiplication. There is, clearly, no other $D$-norm with this property.

Equipped with this commutative multiplication, the set of $D$-norms on $\R^d$ is, therefore, a semigroup with an identity element.

\begin{rem}\upshape 
When applied to the representation of an arbitrary sms rv $\bfeta$ in Remark \ref{rem:multiplication_of_generators_and_eta}, this implies that multiplication with an independent rv $\xi\ge 0$, $E(\xi)=1$, does not alter its distribution:
\[
\bfeta=_D -\frac 1{\sup_{i\in\N}V_i\bfZ_i} =_D -
\frac 1{\sup_{i\in\N}V_i\xi_i\bfZ_i},
\]
where $\xi_i$, $i\in\N$, are independent copies of $\xi$, also independent of $\bfZ_i$, $i\in\N$, and the Poisson process $\set{V_i:\,i\in\N}$.
\end{rem}

Take, on the other hand, as $\bfZ^{(2)}$ a generator of the $D$-norm $\norm\cdot_1$. Then we obtain
\begin{align*}
\norm{\bfx}_{D^{(1)}D^{(2)}}&= E\left(\norm{\bfx\bfZ^{(1)}\bfZ^{(2)}}_\infty\right)\\
&= \int E\left(\norm{\bfx\bfz^{(1)}\bfZ^{(2)}}_\infty\right) \left(P*\bfZ^{(1)}\right)\left(d\bfz^{(1)}\right)\\
&=\int \sum_{i=1}^d \abs{x_i} z_i^{(1)}\, \left(P*\bfZ^{(1)}\right)\left(d\bfz^{(1)}\right)\\
&= \sum_{i=1}^d \abs{x_i} E\left(Z_i^{(1)}\right)\\
&=  \sum_{i=1}^d \abs{x_i},\qquad \bfx\in\R^d,
\end{align*}
i.e., $\norm{\cdot}_{D^{(1)}D^{(2)}}= \norm{\cdot}_1$. Multiplication with the independence norm $\norm\cdot_1$ yields the independence norm and thus, $\norm\cdot_1$ can be viewed as the \emph{maximal attractor} among the set of $D$-norms. There is, clearly, no other $D$-norm with this property.

Applied to the representation of an arbitrary sms rv, this implies that
\[
-\frac 1{\sup_{i\in\N}V_i\bfZ_i \tilde\bfZ_i} =_D \bfeta,
\]
where $\bfeta$ is a sms rv with independent components, if $\tilde\bfZ_i$, $i\in\N$, are independent copies of a generator of $\norm\cdot_1$, also independent of $\bfZ_i$, $i\in\N$, and the Poisson process $\set{V_i:\,i\in\N}$.

\section{Idempotent $D$-Norms}\label{sec:idempotent_D-norms}

The maximum-norm $\norm\cdot_\infty$ and the $L_1$-norm $\norm\cdot_1$ both satisfy
\[
\norm\cdot_{D^2}:= \norm\cdot_{DD}= \norm\cdot_{D}.
\]
Such a $D$-norm will be called \emph{idempotent}. The problem suggests itself to characterize the set of idempotent $D$-norms. This will be achieved in the present section. It turns out that in the bivariate case $\norm\cdot_\infty$ and $\norm\cdot_1$ are the only idempotent $D$-norms, whereas in higher dimensions each idempotent $D$-norm is a certain combination of $\norm\cdot_\infty$ and $\norm\cdot_1$.

\begin{rem}\upshape
Speaking in terms of rv, we will characterize in this section the set of generators $\bfZ$ such that
\[
\bfeta =_D -\frac 1{\sup_{i\in\N} V_i\bfZ_i} =_D -\frac 1{\sup_{i\in\N} V_i\bfZ_i \tilde\bfZ_i},
\]
where $\bfZ_i$,  $\tilde\bfZ_i$, $i\in\N$, are independent copies of $\bfZ$, also independent of the Poisson process $\set{V_i:\,i\in\N}$ on $[0,\infty)$, with intensity measure $r^{-2}\, dr$, see Remark \ref{rem:multiplication_of_generators_and_eta}.
\end{rem}

The following auxiliary result will be crucial for the characterization of idempotent $D$-norms.

\begin{lemma}\label{lem:characterization_of_E(X+Y)=E(X)}
 Let $X$ be a rv with $E(X)=0$ and let $Y$ be an independent copy of $X$. If
 \[
 E(\abs{X+Y})=E(\abs X),
 \]
 then either $X=0$ or $X\in\set{-m,m}$ a.s. with $P(X=-m)=P(X=m)=1/2$ for some $m>0$. The reverse implication is true as well.
\end{lemma}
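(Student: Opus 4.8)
The reverse implication is a one-line check: if $X\in\{-m,m\}$ with equal probabilities then $X+Y$ takes values $-2m,0,2m$ with probabilities $1/4,1/2,1/4$, so $E(\abs{X+Y})=m=E(\abs X)$; and if $X=0$ both sides vanish. The substance is the forward direction, and the plan is to exploit the identity $\max(a,b)=\frac{a+b}2+\frac{\abs{a-b}}2$ (equation \eqref{eqn:representation_of_max(a,b)} in the text) the other way round, writing $\abs{X+Y}=2\max(X+Y,0)-(X+Y)$, but more efficiently I would argue directly. First I would reduce to the symmetric case: let $X'$ be an independent copy of $X$ and set $W:=X-X'$, which is symmetric with $\abs W\le 2c$. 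One checks $E(\abs{X+Y})\ge E(\abs{X-X'})=E(\abs W)$ is the wrong direction, so instead I would keep $X,Y$ and analyze the function $t\mapsto E(\abs{X+t})$.

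The key observation is a strict-convexity/variance argument. Condition on $Y=y$: then $E(\abs{X+y})\ge \abs{E(X)+y}=\abs y$ only when things are degenerate, but the usable inequality is Jensen applied to the concave-on-each-halfline structure. More precisely, I would fix $y$ and compare $E(\abs{X+y})$ with $E(\abs X)=E(\abs{X})$. Since $X$ has mean $0$, $E(\abs{X+y})+E(\abs{X-y})\ge 2E(\abs X)$ would follow from convexity of $t\mapsto\abs{X+t}$ pathwise and Jensen in $t$ if $y$ were averaged symmetrically — and indeed averaging over $y$ distributed as $Y$ (symmetrized) gives $2E(\abs{X+Y_{\mathrm{sym}}})\ge \dots$. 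The cleanest route: let $Y$ be the given independent copy and also introduce $-Y$; by symmetry of the hypothesis-free quantities,
\[
2E(\abs X) = E(\abs{(X+Y)+(X-Y)})\cdot\tfrac12 + \dots
\]
is getting complicated, so I will instead use the direct expansion $2\abs X = \abs{(X+Y)+(-Y)}\wedge\dots$ — the honest tool is: for the convex function $\varphi(t)=E(\abs{X+t})$ we have $\varphi(0)=E\abs X$, $\varphi$ is even after symmetrizing, and the hypothesis says $E_Y\varphi(Y)=\varphi(0)$. Since $\varphi$ is convex with minimum at... no, $\varphi$ need not be minimized at $0$.

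The correct and standard argument, which I would write out, is this: for any $a,b$ one has $\abs{a+b}+\abs{a-b}\ge 2\abs a$ with equality iff $\abs b\le\abs a$ and $a,b$ same... actually equality iff $b$ and $a$ have the property that $a+b$ and $a-b$ don't straddle $0$ oppositely — equality holds iff $\abs b\le\abs a$ OR one of them is $0$; precisely, $\abs{a+b}+\abs{a-b}=2\max(\abs a,\abs b)$. Hence with $a=X$, $b=Y$ and taking expectations (using that $(X,Y)$ and $(X,-Y)$ have the same law by independence and $E(X)=0$... no, that needs symmetry of $X$). To avoid the symmetry gap I would first reduce: the hypothesis $E\abs{X+Y}=E\abs X$ together with $E(X+Y)=0$ forces, by the same argument applied to the symmetrized variable, that $X$ is symmetric — this is the main obstacle and I expect to handle it by noting $E\abs{X+Y}\ge E\abs{X+Y'}$ fails in general, so one instead observes $2E\abs X=E\abs{(X+Y)}+E\abs{(X-Y)}$ requires... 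Let me commit: I will prove $E\abs{X+Y}=2E(\max(\abs X,\abs Y))-E\abs{X-Y}$ via $\abs{a+b}=2\max(\abs a,\abs b)-\abs{a-b}$, deduce $E\abs{X-Y}=2E\max(\abs X,\abs Y)-E\abs X\ge E\abs X$, and separately $E\abs{X-Y}\le 2E\abs X$; combined with convexity this pins $\abs X$ to be a.s. constant on $\{X\ne 0\}$, i.e. $X\in\{-m,0,m\}$, and a final variance computation (or direct evaluation of $E\abs{X+Y}$ with a mass $p$ at $0$) rules out $0<P(X=0)<1$ and forces $P(X=m)=P(X=-m)=1/2$.

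The step I expect to be the genuine obstacle is establishing that $\abs X$ is a.s. equal to a single constant $m$ off the event $\{X=0\}$: this is where strict convexity must be invoked, and I would phrase it as follows — the map $(a,b)\mapsto \abs{a+b}$ restricted to the region $\{ab\ge 0\}$ is linear, so any "slack" in $E\abs{X+Y}\ge\abs{E(X+Y)}=0$-type bounds comes only from sign changes; chasing the equality condition in $\abs{a+b}+\abs{a-b}=2\max(\abs a,\abs b)$ across the product measure $P^X\otimes P^X$ shows that for $P^X$-almost every pair $(x,y)$ with $xy<0$ we need $\abs x=\abs y$, which (for a non-degenerate $X$) is only possible if the support of $X$, minus possibly $\{0\}$, is contained in a two-point set $\{-m,m\}$; symmetry then gives the equal probabilities $1/2,1/2$ and excludes an atom at $0$.
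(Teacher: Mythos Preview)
Your reverse implication is fine. The forward direction, however, never closes.

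The approach you ``commit'' to rests on the identity $\abs{a+b}+\abs{a-b}=2\max(\abs a,\abs b)$, and you then speak of ``chasing the equality condition'' in it. But this is an \emph{identity}, not an inequality: it holds for every pair $(a,b)$, so it imposes no constraint whatsoever on the law of $(X,Y)$. From it you correctly extract $E\abs{X-Y}=2E\max(\abs X,\abs Y)-E\abs X\ge E\abs X$ (using $\max\ge$ the average) and the trivial $E\abs{X-Y}\le 2E\abs X$, but the sandwich $E\abs X\le E\abs{X-Y}\le 2E\abs X$ does not force $\abs X$ to be a.s.\ constant, nor does it yield ``$\abs x=\abs y$ for a.e.\ pair with $xy<0$''. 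The passage from these bounds to a two-point support is simply missing. You also invoke ``symmetry'' of $X$ at the end to get the $1/2,1/2$ weights and to exclude an atom at $0$, but symmetry of $X$ was precisely the obstacle you flagged earlier and never resolved.

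What does work---and is close to what you were reaching for with $\varphi(t)=E\abs{X+t}$---is conditional Jensen: for each fixed $y$, $E\abs{X+y}\ge \abs{E(X)+y}=\abs y$, with equality iff $X+y$ has a.s.\ constant sign. Averaging over $y$ with the law of $X$ gives $E\abs{X+Y}\ge E\abs Y=E\abs X$, and the hypothesis forces equality, hence for $P^X$-a.e.\ $y$ either $X\ge -y$ a.s.\ or $X\le -y$ a.s. Running this through the essential sup and inf of $X$ pins the support to $\{-m,m\}$ (the case $y=0$ directly excludes an atom at $0$), after which $E(X)=0$ gives the equal weights. The paper takes a different but equivalent route: it writes $X=F^{-1}(U_1)$, $Y=F^{-1}(U_2)$, splits the double integral for $E\abs{X+Y}$ over the four rectangles determined by $F(0)$, and finds that the hypothesis forces the cross term $\int_0^{F(0)}\int_{F(0)}^1\abs{F^{-1}(u)+F^{-1}(v)}\,du\,dv$ to vanish, whence $F^{-1}$ is constant on $(0,F(0)]$ and on $(F(0),1)$ with opposite values $\mp m$, and then $E(X)=0$ gives $F(0)=1/2$. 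Both arguments ultimately isolate the opposite-sign contribution; the conditional-Jensen version, once you actually write it down, is the more conceptual of the two.
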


\begin{proof}
Suppose that $P(X=-m)=P(X=m)=1/2$ for some $m>0$. Then, obviously,
 \[
 E(\abs X)=m=E(\abs{X+Y}).
 \]
 Next we establish the reverse implication. Suppose that $X$ is not a.s the constant zero. Denote by $F$ the df of $X$. Without loss of generality we can assume the representation $X=F^{-1}(U_1)$, $Y=F^{-1}(U_2)$, where $U_1,U_2$ are independent, on $(0,1)$ uniformly distributed rv and $F^{-1}(q):=\inf\set{t\in\R:\,F(t)\ge q}$, $q\in(0,1)$, is the generalized inverse of $F$. The well known equivalence
 \[
 F^{-1}(q)\le t\iff q\le F(t),\qquad q\in(0,1),\,t\in\R,
 \]
 (see, e.g. \citet[equation (1.2.9)]{reiss89}) together with Fubini's theorem implies
 \begin{align*}
 &E(\abs{X+Y})\\
 &=E\left(\abs{F^{-1}(U_1)+F^{-1}(U_2)}\right)\\
 &=\int_0^1\int_0^1 \abs{F^{-1}(u)+F^{-1}(v)}\,du\,dv\\
 &=-\int_0^{F(0)} \int_0^{F(0)} F^{-1}(u) + F^{-1}(v)\,du\,dv + \int_{F(0)}^1 \int_{F(0)}^1 F^{-1}(u) + F^{-1}(v)\,du\,dv\\
 &\hspace*{1cm} + 2 \int_0^{F(0)} \int_{F(0)}^1 \abs{F^{-1}(u) + F^{-1}(v)}\,du\,dv\\
 &=- \int_0^{F(0)}\left(F(0)F^{-1}(v) + \int_0^{F(0)} F^{-1}(u)\,du\right)\,dv\\
 &\hspace*{1cm} +\int_{F(0)}^1\left(\left(1-F(0)\right) F^{-1}(v) + \int_{F(0)}^1 F^{-1}(u)\,du\right)\,dv\\
 &\hspace*{1cm} + 2 \int_0^{F(0)} \int_{F(0)}^1 \abs{F^{-1}(u) + F^{-1}(v)}\,du\,dv\\
 &=-2F(0) \int_0^{F(0)} F^{-1}(v)\,dv + 2(1-F(0)) \int_{F(0)}^1 F^{-1}(v)\,dv\\
 &\hspace*{1cm} + 2 \int_0^{F(0)} \int_{F(0)}^1 \abs{F^{-1}(u) + F^{-1}(v)}\,du\,dv
 \end{align*}
 and
 \[
 E(\abs X)=-\int_0^{F(0)}F^{-1}(u)\,du+ \int_{F(0)}^1 F^{-1}(u)\,du.
 \]
 From the assumption $E(\abs{X+Y})=E(\abs X)$ we, thus, obtain the equation
 \begin{align*}
 0&= (1-2F(0)) \int_0^{F(0)} F^{-1}(v)\,dv+ (1-2F(0)) \int_{F(0)}^1 F^{-1}(v)\,dv\\
 &\hspace*{1cm} + 2 \int_0^{F(0)} \int_{F(0)}^1 \abs{F^{-1}(u) + F^{-1}(v)}\,du\,dv\\
 \intertext{or}
 0&= (1-2F(0))\int_0^1 F^{-1}(v)\,dv + 2 \int_0^{F(0)} \int_{F(0)}^1 \abs{F^{-1}(u) + F^{-1}(v)}\,du\,dv.
 \end{align*}
 The assumption $0=E(X)=\int_0^1 F^{-1}(v)\,dv$ now yields
 \[
 \int_0^{F(0)} \int_{F(0)}^1 \abs{F^{-1}(u) + F^{-1}(v)}\,du\,dv = 0
 \]
 and, thus,
 \begin{equation}\label{eqn:crucial_equation_in_proof}
 F^{-1}(u)+F^{-1}(v)= 0\qquad \mathrm{\ for\ }\lambda\mathrm{-a.e.\ } (u,v)\in[0,F(0)]\times[F(0),1],
 \end{equation}
 where $\lambda$ denotes the Lebesgue-measure on $[0,1]$.

 If $F(0)=0$, then $P(X>0)=1$ and, thus, $E(X)>0$, which would be a contradiction. If $F(0)=1$, then $P(X<0)>0$ unless $P(X=0)=1$, which we have excluded, and, thus, $E(X)<0$, which would again be a contradiction. We, consequently, have established $0<F(0)<1$.

 As the function $F^{-1}(q)$, $q\in(0,1)$, is in general continuous from the left (see, e.g., \citet[Lemma A.1.2]{reiss89}), equation \eqref{eqn:crucial_equation_in_proof} implies that $F^{-1}(v)$ is a constant function on $(0,F(0)]$ and on $(F(0),1)$, precisely,
 \[
 F^{-1}(v) = \begin{cases}
 -m,&v\in (0,F(0)],\\
 m, &v\in (F(0),1),
 \end{cases}
 \]
 for some $m>0$. Note that the representation $X=F^{-1}(U_1)$ together with the assumption that  $X$ is not a.s. the constant zero, implies  $m\not=0$. The condition
 \[
 0=E(X)=\int_0^{F(0)}F^{-1}(v)\,dv + \int_{F(0)}^1 F^{-1}(v)\,dv = m(1-2F(0))
 \]
 implies $F(0)=1/2$ and, thus,
 \[
 X=F^{-1}(U_1) =\begin{cases}
 m,&U_1>\frac 12,\\
 -m,&U_1\le \frac12,
 \end{cases}
 \]
 which is the assertion.
\end{proof}

The next Proposition is the first main result of this section.

\begin{prop}\label{prop:idempotent_D-norms, bivariate_case}
A bivariate $D$-norm $\norm\cdot_D$ is idempotent $\Leftrightarrow$ $\norm\cdot_D\in\set{\norm\cdot_1,\norm\cdot_\infty}$.
\end{prop}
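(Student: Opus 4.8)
The plan is to prove the nontrivial direction: if a bivariate $D$-norm $\norm\cdot_D$ is idempotent, then $\norm\cdot_D\in\set{\norm\cdot_1,\norm\cdot_\infty}$; the reverse direction has already been checked in Section \ref{sec:idempotent_D-norms}. By Remark \ref{rem:characterization_of_generators} we may pick a generator $\bfZ=(Z_1,Z_2)$ of $\norm\cdot_D$ satisfying $Z_1+Z_2=2$ a.s., so that $Z_2=2-Z_1$ and the whole generator is encoded by the single random variable $Z_1$, which takes values in $[0,2]$ with $E(Z_1)=1$. The idea is to evaluate the idempotency identity $\norm\cdot_{D^2}=\norm\cdot_D$ at a cleverly chosen point and reduce it to the hypothesis of Lemma \ref{lem:characterization_of_E(X+Y)=E(X)}.

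First I would write out, for an independent copy $\bfZ'=(Z_1',2-Z_1')$ of $\bfZ$,
\[
\norm{\bfx}_{D^2}=E\left(\max\left(\abs{x_1}Z_1Z_1',\ \abs{x_2}(2-Z_1)(2-Z_1')\right)\right),
\]
and evaluate at $\bfx=(1,1)$. Using the elementary identity \eqref{eqn:representation_of_max(a,b)}, $\max(a,b)=\frac{a+b}2+\frac{\abs{a-b}}2$, together with $E(Z_1Z_1')=E(Z_1)^2=1$ and $E((2-Z_1)(2-Z_1'))=1$ (independence!), the symmetric part contributes a constant, and similarly for $\norm{(1,1)}_D$ with $Z_1Z_1'$ replaced by $Z_1$ and $(2-Z_1)(2-Z_1')$ by $2-Z_1$. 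Subtracting, the idempotency identity $\norm{(1,1)}_{D^2}=\norm{(1,1)}_D$ becomes
\[
E\left(\abs{Z_1Z_1'-(2-Z_1)(2-Z_1')}\right)=E\left(\abs{Z_1-(2-Z_1)}\right)=2E(\abs{Z_1-1}).
\]
Now I would expand the left-hand side: $Z_1Z_1'-(2-Z_1)(2-Z_1')=2(Z_1-1)+2(Z_1'-1)$, which miraculously linearizes. Setting $X:=Z_1-1$, which is bounded (in $[-1,1]$), has mean zero, and has an independent copy $Y:=Z_1'-1$, the identity reads $E(\abs{X+Y})=E(\abs X)$ — exactly the hypothesis of Lemma \ref{lem:characterization_of_E(X+Y)=E(X)}.

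Applying that Lemma, either $X=0$ a.s., i.e.\ $Z_1=1=Z_2$ a.s., which gives $\norm\cdot_D=\norm\cdot_\infty$; or $X\in\set{-m,m}$ a.s.\ with equal probabilities for some $m\in(0,1]$, i.e.\ $Z_1\in\set{1-m,1+m}$ each with probability $1/2$ (and correspondingly $Z_2\in\set{1+m,1-m}$). In the latter case I would compute the resulting $D$-norm directly:
\[
\norm{\bfx}_D=\tfrac12\max(\abs{x_1}(1-m),\abs{x_2}(1+m))+\tfrac12\max(\abs{x_1}(1+m),\abs{x_2}(1-m)).
\]
For this to be idempotent, I would need to run the same reduction once more, or simply check which values of $m$ survive: repeating the computation of $\norm{(1,1)}_{D^2}$ versus $\norm{(1,1)}_D$ is automatic, so instead I would test a second point, e.g.\ $\bfx=(1,t)$ for small $t>0$, or argue that a two-point generator $Z_1$ is itself idempotent under the multiplication only when $m=1$. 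Concretely, with $m<1$ the product $Z_1Z_1'$ takes four values $(1-m)^2,(1-m)(1+m),(1+m)^2$ and is no longer a two-point symmetric shift of a constant, so $\bfZ\bfZ'$ generates a strictly different norm than $\bfZ$ unless $m=1$; and $m=1$ means $Z_1\in\set{0,2}$ each with probability $1/2$, i.e.\ $\bfZ$ is a random permutation of $(2,0)$, which is exactly the generator of $\norm\cdot_1$. Thus $\norm\cdot_D=\norm\cdot_1$, completing the proof.

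The step I expect to be the main obstacle is the last one: showing that among the two-point symmetric generators only $m=1$ yields an idempotent $D$-norm. The slick linearization that worked at $\bfx=(1,1)$ is special to that point, so I would need either a clean second evaluation point at which the idempotency identity fails for $m\in(0,1)$, or an argument at the level of generators that the componentwise product of two independent copies of a nondegenerate two-point generator cannot be equivalent to the original. I anticipate the cleanest route is the generator-level argument: compute the law of $(Z_1Z_1', (2-Z_1)(2-Z_1'))$ explicitly (four atoms), observe it does not satisfy the constraint $\text{first}+\text{second}\equiv 2$ unless $m\in\set{0,1}$, and invoke uniqueness of the $\norm\cdot_1$-normalized representative from Remark \ref{rem:characterization_of_generators} — although a direct evaluation at a well-chosen $\bfx$ may ultimately be shorter to write.
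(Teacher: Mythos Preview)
Your reduction to Lemma \ref{lem:characterization_of_E(X+Y)=E(X)} via a generator with $Z_1+Z_2=2$ and evaluation at $\bfx=(1,1)$ is exactly the paper's argument, including the linearization $Z_1Z_1'-(2-Z_1)(2-Z_1')=2(X+Y)$.

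The divergence is in the final step, forcing $m=1$. Your preferred generator-level route has a real gap: Remark \ref{rem:characterization_of_generators} asserts only \emph{existence} of a generator with $\norm{\bfZ}_1=d$, not uniqueness of its law, so observing that $\bfZ\bfZ'$ fails the constraint $Z_1Z_1'+(2-Z_1)(2-Z_1')\equiv 2$ does not by itself show the product norm differs from $\norm\cdot_D$. Uniqueness of the spectral measure on the simplex is of course true, but to exploit it you would have to pass from $\bfZ\bfZ'$ to its $\norm\cdot_1$-normalized representative via the size-biased tilt (the angular law is that of $\bfZ\bfZ'/\norm{\bfZ\bfZ'}_1$ under $dQ=\frac{\norm{\bfZ\bfZ'}_1}{E\norm{\bfZ\bfZ'}_1}\,dP$) and then compare distributions --- more work than ``observe it does not satisfy the constraint'' suggests.

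The paper instead carries out your alternative: it evaluates at a second point $\bfx=(1,a)$ with $a\in\bigl(\tfrac{1-m}{1+m},1\bigr)$, chosen so that all four maxima resolve explicitly. A direct computation gives
\[
\norm{(1,a)}_{D^2}=\tfrac14(1+m)^2(1+a)+\tfrac12(1-m^2),\qquad \norm{(1,a)}_D=\tfrac12(1+m)(1+a),
\]
and equating these yields $(m-1)(a-1)=0$, hence $m=1$. This is precisely the ``well-chosen $\bfx$'' you anticipated but did not pin down.
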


\begin{proof}
It suffices to establish the implication
\[
\norm\cdot_{D^2}=\norm\cdot_D,\,\norm\cdot_D\not=\norm\cdot_\infty\implies \norm\cdot_D=\norm\cdot_1.
\]

Let $\bfZ^{(1)}=\left(Z_1^{(1)}, Z_2^{(1)}\right)$, $\bfZ^{(2)}=\left(Z_1^{(2)}, Z_2^{(2)}\right)$ be independent and identically distributed generators of $\norm\cdot_D$. According to Remark \ref{rem:characterization_of_generators} we can assume that $Z_1^{(1)}+Z_2^{(1)}=2=Z_1^{(2)}+Z_2^{(2)}$. Put $X:=Z_1^{(1)}-1$, $Y:=Z_1^{(2)}-1$. Then $X,Y$ are independent and identically distributed with $X\in[-1,1]$, $E(X)=0$. From equation \eqref{eqn:representation_of_max(a,b)}
we obtain the representation
\begin{align*}
&E\left(\max\left(Z_1^{(1)}Z_1^{(2)}, Z_2^{(1)}Z_2^{(2)}\right)\right)\\
&=E\left(\frac{Z_1^{(1)}Z_1^{(2)}}2 + \frac{Z_2^{(1)}Z_2^{(2)}}2\right) + \frac 12 E\left(\abs{Z_1^{(1)}Z_1^{(2)} - Z_2^{(1)}Z_2^{(2)}}\right)\\
&= 1 + E\left(\abs{Z_1^{(1)}-1 + Z_1^{(2)}-1}\right)\\
&= 1+ E(\abs{X+Y})
\end{align*}
as well as
\[
E\left(\max\left(Z_1^{(1)},Z_2^{(2)}\right)\right) = 1+ E(\abs X).
\]
Lemma \ref{lem:characterization_of_E(X+Y)=E(X)} now implies that $P(X=m)=P(X=-m)=1/2$ for some $m\in(0,1]$. It remains to show that $m=1$.

Set $x=1$ and $y=a$, where $0<a<1$ satisfies  $a(1+m)>1-m$. Then $a(1+m)^2>(1-m)^2$ as well, and we obtain by equation \ref{eqn:representation_of_max(a,b)}
\begin{align*}
\norm{(x,y)}_{D^2} &= E\left(\max\left(Z_1^{(1)}Z_1^{(2)}, a\left(2-Z_1^{(1)}\right) \left(2-Z_1^{(2)}\right)\right)\right)\\
&=\frac 14 \max\left((1-m)^2, a(1+m)^2\right) + \frac14 \max\left((1+m)^2,a(1-m)^2\right)\\
&\hspace*{1cm} + \frac 12 \max\left(1-m^2,a(1-m^2)\right)\\
&= \frac 14 a(1+m)^2 + \frac 14(1+m)^2 +\frac 12 (1-m^2)\\
&= \frac 14 (1+m)^2(1+a) + \frac 12 (1-m^2)
\end{align*}
and
\begin{align*}
\norm{(x,y)}_D &= E\left(\max\left(Z_1^{(1)}, a\left(2-Z_1^{(1)}\right)\right)\right)\\
&=\frac 12 \max(1+m,a(1-m)) + \frac 12 \max(1-m,a(1+m))\\
&= \frac 12 (1+m) + \frac 12 a(1+m)\\
&= \frac 12 (1+m)(1+a).
\end{align*}
From the equality $\norm{(x,y)}_{D^2}=\norm{(x,y)}_D$ and the fact that $1+m>0$ we, thus, obtain
\begin{align*}
&\frac 14 (1+m)(1+a) + \frac 12 (1-m) = \frac 12 (1+a)\\
&\iff (m-1)(a-1)=0\\
&\iff m=1,
\end{align*}
which completes the proof.
\end{proof}

Next we will extend Proposition \ref{prop:idempotent_D-norms, bivariate_case} to arbitrary dimension $d\ge 2$. Denote by $\bfe_i:=(0,\dots,0,1,0,\dots,0)\in\R^d$ the $i$-th unit vector in $\R^d$, $1\le i\le d$, and let $\norm\cdot_D$ be an arbitrary $D$-norm on $\R^d$. Then
\[
\norm{(x,y)}_{D_{i,j}}:=\norm{x\bfe_i+y\bfe_j}_D,\qquad (x,y)\in\R^2,\;1\le i<j\le d,
\]
defines a $D$-norm on $\R^2$, called \emph{bivariate projection} of $\norm\cdot_D$. If $\bfZ=(Z_1,\dots,Z_d)$ is a generator of $\norm\cdot_D$, then $(Z_i,Z_j)$ generates $\norm\cdot_{D_{i,j}}$.

\begin{prop}\label{prop:idempotent_norms:general_case}
Let $\norm\cdot_D$ be a $D$-norm on $\R^d$ such that each bivariate projection $\norm\cdot_{D_{i,j}}$ is different from the bivariate sup-norm $\norm\cdot_\infty$. Then $\norm\cdot_D$ is idempotent $\Leftrightarrow$ $\norm\cdot_D=\norm\cdot_1$.
\end{prop}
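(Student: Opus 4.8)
The plan is to reduce the $d$-dimensional statement to the bivariate Proposition \ref{prop:idempotent_D-norms, bivariate_case} by passing to bivariate projections, and then to exploit how degenerate the $L_1$-norm is as a $D$-norm. The direction ``$\Leftarrow$'' is already recorded at the beginning of Section \ref{sec:idempotent_D-norms} (equivalently, it follows from the computation in Section \ref{sec:multiplication_of_D-norms} that multiplication with a generator of $\norm\cdot_1$ always produces $\norm\cdot_1$), so only ``$\Rightarrow$'' needs a proof.

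So suppose $\norm\cdot_D$ is idempotent and fix a generator $\bfZ=(Z_1,\dots,Z_d)$. The first step is to show that every bivariate projection $\norm\cdot_{D_{i,j}}$ is itself idempotent. Take independent copies $\bfZ^{(1)},\bfZ^{(2)}$ of $\bfZ$; then $\bfZ^{(1)}\bfZ^{(2)}$ generates $\norm\cdot_{D^2}$, hence its $(i,j)$-projection $\bigl(Z_i^{(1)}Z_i^{(2)},Z_j^{(1)}Z_j^{(2)}\bigr)$ generates $\norm\cdot_{(D^2)_{i,j}}$. On the other hand, $\bigl(Z_i^{(1)},Z_j^{(1)}\bigr)$ and $\bigl(Z_i^{(2)},Z_j^{(2)}\bigr)$ are independent copies of the generator $(Z_i,Z_j)$ of $\norm\cdot_{D_{i,j}}$, so that same rv generates $\norm\cdot_{(D_{i,j})^2}$. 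Therefore $\norm\cdot_{(D_{i,j})^2}=\norm\cdot_{(D^2)_{i,j}}=\norm\cdot_{D_{i,j}}$, using idempotency of $\norm\cdot_D$ in the last equality. Proposition \ref{prop:idempotent_D-norms, bivariate_case} then forces $\norm\cdot_{D_{i,j}}\in\set{\norm\cdot_1,\norm\cdot_\infty}$, and since by hypothesis $\norm\cdot_{D_{i,j}}\neq\norm\cdot_\infty$, we conclude $\norm\cdot_{D_{i,j}}=\norm\cdot_1$ for all $1\le i<j\le d$.

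The second step is to translate $\norm\cdot_{D_{i,j}}=\norm\cdot_1$ into a statement about $\bfZ$. Evaluating at $(1,1)$ gives $E\bigl(\max(Z_i,Z_j)\bigr)=2=E(Z_i)+E(Z_j)=E(Z_i+Z_j)$; since $\max(a,b)\le a+b$ for $a,b\ge 0$, this forces $\min(Z_i,Z_j)=0$ a.s., i.e. $Z_iZ_j=0$ a.s. As there are only finitely many pairs, $\bfZ$ has a.s. at most one nonzero coordinate. Hence for every $\bfx\in\R^d$ at most one of the numbers $\abs{x_i}Z_i$ is nonzero, so $\max_{1\le i\le d}(\abs{x_i}Z_i)=\sum_{i=1}^d\abs{x_i}Z_i$ a.s., and taking expectations yields $\norm{\bfx}_D=\sum_{i=1}^d\abs{x_i}E(Z_i)=\norm{\bfx}_1$.

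I expect the only slightly delicate point to be the first step: observing that idempotency is inherited by bivariate projections, which rests on the commutation of ``take the $(i,j)$-projection'' with ``multiply independent generators componentwise'', i.e. on the identity $\norm\cdot_{(D^2)_{i,j}}=\norm\cdot_{(D_{i,j})^2}$. Once the projections are pinned down to $\norm\cdot_1$, the passage from this pairwise degeneracy to $\bfZ$ being supported on the coordinate axes, and thence to $\norm\cdot_D=\norm\cdot_1$, is immediate; in particular the familiar pairwise-versus-joint-independence subtlety does not arise here, because $Z_iZ_j=0$ a.s. for all pairs already pins down the support of $\bfZ$ completely.
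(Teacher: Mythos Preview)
Your proof is correct and follows the same overall strategy as the paper: show that idempotency of $\norm\cdot_D$ is inherited by every bivariate projection, apply Proposition~\ref{prop:idempotent_D-norms, bivariate_case}, and then pass from ``all bivariate projections equal $\norm\cdot_1$'' to ``$\norm\cdot_D=\norm\cdot_1$''. Your treatment of the first step is in fact more explicit than the paper's, which simply asserts that the projections are idempotent.

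The only genuine difference is in the last step. The paper invokes an external result (Theorem~4.3.3 in \citet{fahure10}) that for a sms distribution pairwise independence of the margins implies full independence, and hence $\norm\cdot_D=\norm\cdot_1$. You instead argue directly at the level of generators: from $E(\max(Z_i,Z_j))=2=E(Z_i+Z_j)$ you deduce $Z_iZ_j=0$ a.s.\ for every pair, so $\bfZ$ is supported on the coordinate axes and $\max_i\abs{x_i}Z_i=\sum_i\abs{x_i}Z_i$ a.s. This is a clean, self-contained substitute for the cited theorem (and is essentially how one proves it); it buys independence from the external reference at no real cost. Conversely, the paper's route is shorter on the page if one is willing to quote the literature.
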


\begin{proof}
If $\norm\cdot_D$ is idempotent, then each bivariate projection is an idempotent $D$-norm on $\R^2$ and, thus, each bivariate projection is by Proposition \ref{prop:idempotent_D-norms, bivariate_case} necessarily the bivariate $L_1$-norm $\norm\cdot_1$. This implies bivariate independence of the margins of the sms df $G(\bfx)=\exp\left(-\norm{\bfx}_D\right)$, $\bfx\le\bfzero\in\R^d$. It is well-known that bivariate independence of the margins of $G$ implies complete independence (see, e.g., \citet[Theorem 4.3.3]{fahure10}) and, thus, $\norm\cdot_D=\norm\cdot_1$ on $\R^d$.
\end{proof}

If we allow bivariate complete dependence, then we obtain the complete class of idempotent $D$-norms on $\R^d$ as mixtures of lower-dimensional $\norm\cdot_\infty$- and $\norm\cdot_1$-norms. To this end we will first introduce the complete dependence frame of a $D$-norm.

Let $D$ be an arbitrary $D$-norm on $\R^d$ such that at least one bivariate projection $\norm\cdot_{D_{i,j}}$ equals $\norm\cdot_\infty$ on $\R^2$. Then there exist nonempty disjoint subsets $A_1,\dots,A_K$ of $\set{1,\dots,d}$, $1\le K< d$, $\abs{A_k}\ge 2$, $1\le k\le K$, such that
\[
\norm{\sum_{i\in A_k} x_i\bfe_i}_D =\max_{i\in A_k} \abs{x_i},\qquad \bfx\in\R^d,\,1\le k\le K,
\]
and no other projection $\norm{\sum_{i\in B} x_i\bfe_i}_D$, $B\subset\set{1,\dots,d}$, $\abs B\ge 2$, $B\not= A_k$, $1\le k\le K$, is the sup-norm $\norm\cdot_\infty$ on $\R^{\abs B}$. We call $A_1,\dots,A_K$ the \emph{complete dependence frame} (CDF) of $\norm\cdot_D$. If there is no completely dependent bivariate projection of $\norm\cdot_D$, then we say that its CDF is \emph{empty}.

To illustrate the significance of $A_1,\dots,A_K$, take a sms rv $\bfeta=(\eta_1,\dots,\eta_d)$ with df $G(\bfx)=\exp\left(-\norm{\bfx}_D\right)$, $\bfx\le\bfzero\in\R^d$. Then the sets $A_1,\dots,A_K$ assemble the indices of completely dependent components $\eta_i=\eta_j$ a.s., $i,j\in A_k$, and the sets $A_k$ are maximally chosen, i.e., we \emph{do not} have $\eta_i=\eta_j$ a.s. if $i\in A_k$ for some $j\in A_k^\complement$.

The next result characterizes the set of idempotent $D$-norms with at least one completely dependent bivariate projections.

\begin{theorem}
Let $\norm\cdot_D$ be an idempotent $D$-norm with non empty CDF $A_1,\dots,A_K$. Then we have
\[
\norm{\bfx}_D=\sum_{k=1}^K\max_{i\in A_k}\abs{x_i} + \sum_{i\in\set{1,\dots,d}\backslash \cup_{k=1}^d A_k} \abs{x_i},\qquad \bfx\in\R^d.
\]
On the other hand, the above equation defines for each set of nonempty disjoint subsets $A_1,\dots,A_K$ of $\set{1,\dots,d}$ with $\abs{A_k}\ge 2$, $1\le k\le K < d$, an idempotent $D$-norm on $\R^d$ with CDF $A_1,\dots,A_K$.
\end{theorem}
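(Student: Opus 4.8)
The plan is to prove the two implications separately. For ``$\Rightarrow$'' the idea is to collapse each completely dependent block of coordinates to a single one, reducing $\norm\cdot_D$ to a lower-dimensional $D$-norm and invoking Proposition~\ref{prop:idempotent_norms:general_case}; for ``$\Leftarrow$'' it suffices to write down an explicit generator. So suppose $\norm\cdot_D$ is idempotent with CDF $A_1,\dots,A_K$, put $A_0:=\set{1,\dots,d}\setminus\bigcup_{k=1}^K A_k$, and let $m:=K+\abs{A_0}$. The first step is the observation that \emph{every} generator $\bm W=(W_1,\dots,W_d)$ of $\norm\cdot_D$ is a.s.\ constant on each block: since $A_k$ lies in the CDF, $\norm{\sum_{j\in A_k}\bfe_j}_D=1$, i.e.\ $E(\max_{j\in A_k}W_j)=1=E(W_i)$ for each $i\in A_k$, and since $\max_{j\in A_k}W_j\ge W_i\ge0$ this forces $W_i=W_j$ a.s.\ for all $i,j\in A_k$. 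Writing $\tilde W_k$ for the common value of $\bm W$ on $A_k$, put $\tilde{\bm W}:=(\tilde W_1,\dots,\tilde W_K,(W_i)_{i\in A_0})\in[0,\infty)^m$ and let $\norm\cdot_{\tilde D}$ denote the $D$-norm on $\R^m$ it generates. Pulling the common factor $\tilde W_k$ out of $\max_{i\in A_k}$ yields the reduction identity $\norm{\bfx}_D=\norm{\bm\tau(\bfx)}_{\tilde D}$ for all $\bfx\in\R^d$, where $\bm\tau(\bfx):=(\max_{i\in A_1}\abs{x_i},\dots,\max_{i\in A_K}\abs{x_i},(\abs{x_i})_{i\in A_0})$ maps $\R^d$ onto $[0,\infty)^m$.

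Next, idempotency descends to $\norm\cdot_{\tilde D}$. If $\bm W,\bm W'$ are independent copies of the generator above, then $\bm W\bm W'$ is again a.s.\ constant on each block, with block-reduction $\tilde{\bm W}\tilde{\bm W}'$, and $\tilde{\bm W},\tilde{\bm W}'$ are independent generators of $\norm\cdot_{\tilde D}$; since $\bm W\bm W'$ generates $\norm\cdot_{D^2}$ and $\tilde{\bm W}\tilde{\bm W}'$ generates $\norm\cdot_{\tilde D^2}$, the reduction identity applied to $\bm W\bm W'$ reads $\norm{\bfx}_{D^2}=\norm{\bm\tau(\bfx)}_{\tilde D^2}$, and comparing this with $\norm\cdot_{D^2}=\norm\cdot_D$, with the reduction identity for $D$, and with the surjectivity of $\bm\tau$ gives $\norm\cdot_{\tilde D^2}=\norm\cdot_{\tilde D}$. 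Moreover $\norm\cdot_{\tilde D}$ has no bivariate projection equal to $\norm\cdot_\infty$: complete dependence of components ($\eta_i=\eta_j$ a.s.) is an equivalence relation on $\set{1,\dots,d}$ whose classes of size $\ge2$ are, by the characterization of the CDF, precisely $A_1,\dots,A_K$, so if $\norm\cdot_{\tilde D_{a,b}}=\norm\cdot_\infty$ for two distinct super-indices $a\ne b$, picking representatives $i,j$ (arbitrary elements of the corresponding blocks, resp.\ the corresponding members of $A_0$) the reduction identity gives $\norm{x\bfe_i+y\bfe_j}_D=\max(\abs x,\abs y)$, forcing $i$ and $j$ into the same class -- impossible. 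Hence Proposition~\ref{prop:idempotent_norms:general_case} applies (if $m=1$ then necessarily $A_1=\set{1,\dots,d}$, $\norm\cdot_D=\norm\cdot_\infty$, and the claimed formula is immediate), giving $\norm\cdot_{\tilde D}=\norm\cdot_1$ on $\R^m$; substituting into the reduction identity yields $\norm{\bfx}_D=\sum_{k=1}^K\max_{i\in A_k}\abs{x_i}+\sum_{i\in\set{1,\dots,d}\setminus\bigcup_k A_k}\abs{x_i}$.

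For the converse, let $A_1,\dots,A_K\subseteq\set{1,\dots,d}$ be disjoint and nonempty with $\abs{A_k}\ge2$, put $A_0:=\set{1,\dots,d}\setminus\bigcup_k A_k$, $m:=K+\abs{A_0}$, and define $\norm\cdot_D$ by the asserted formula. I would exhibit the generator $\bm Z$ obtained by drawing $J$ uniformly from the $m$ ``super-indices'' -- the blocks $A_1,\dots,A_K$ together with the singletons $\set i$, $i\in A_0$ -- and setting $Z_i:=m$ for every index $i$ belonging to the chosen super-index and $Z_i:=0$ otherwise. Then $E(Z_i)=m\cdot\tfrac1m=1$ for each $i$, and conditioning on $J$ gives $E(\max_{1\le i\le d}\abs{x_i}Z_i)=\tfrac1m\bigl(\sum_{k=1}^K m\max_{i\in A_k}\abs{x_i}+\sum_{i\in A_0}m\abs{x_i}\bigr)=\norm{\bfx}_D$, so $\norm\cdot_D$ is a $D$-norm. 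For idempotency take an independent copy $(\bm Z',J')$: then $Z_iZ_i'$ vanishes unless $J=J'$, and on the event $\set{J=J'=\sigma}$ (probability $1/m^2$) it equals $m^2\max_{i\in\sigma}\abs{x_i}$, so $E(\max_{1\le i\le d}\abs{x_i}Z_iZ_i')=\sum_\sigma\max_{i\in\sigma}\abs{x_i}=\norm{\bfx}_D$, i.e.\ $\norm\cdot_{D^2}=\norm\cdot_D$. Finally the formula gives $\norm{\sum_{i\in A_k}x_i\bfe_i}_D=\max_{i\in A_k}\abs{x_i}$ for each $k$, while $\norm{x\bfe_i+y\bfe_j}_D=\abs x+\abs y\ne\max(\abs x,\abs y)$ whenever $i,j$ lie in distinct super-indices; thus the maximal groups of pairwise completely dependent indices are exactly $A_1,\dots,A_K$, i.e.\ the CDF of $\norm\cdot_D$ is $A_1,\dots,A_K$.

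I expect the delicate part to be the ``$\Rightarrow$'' reduction: verifying that every generator is block-constant, that this reduction commutes with the $D$-norm multiplication so idempotency passes to $\norm\cdot_{\tilde D}$, and -- above all -- that the maximality built into the CDF is exactly what rules out a completely dependent bivariate projection of $\norm\cdot_{\tilde D}$. Once these are in place Proposition~\ref{prop:idempotent_norms:general_case} does the substantive work, and the converse reduces to the two routine conditioning computations above.
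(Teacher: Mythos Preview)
Your proof is correct and follows the same overall architecture as the paper's: collapse each completely dependent block to a single coordinate, check that the reduced $D$-norm is still idempotent with empty CDF, invoke Proposition~\ref{prop:idempotent_norms:general_case}, and for the converse build the same ``pick a super-index uniformly at random'' generator. The one substantive difference is in how the reduction is carried out. The paper works at the level of the sms random vector $\bfeta$: since $\eta_i=\eta_j$ a.s.\ for $i,j\in A_k$, it simply keeps one representative $\eta_{k^*}$ per block and asserts that the resulting $\bfeta^*$ has an idempotent $D$-norm with no completely dependent pairs. You instead work at the level of generators, first proving that \emph{every} generator of $\norm\cdot_D$ is a.s.\ constant on each $A_k$ and then showing directly that the block-reduction $\bm W\mapsto\tilde{\bm W}$ intertwines the product of generators with the product on $\R^m$, so that idempotency descends. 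Your route is a bit more self-contained (no appeal to the $\bfeta$-picture) and makes explicit two points the paper leaves to the reader: why the reduced norm inherits idempotency, and why the explicit generator in the converse actually yields an idempotent norm with the prescribed CDF.
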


\begin{proof}
Let $\bfeta=(\eta_1,\dots,\eta_d)$ be a sms rv with df $G(\bfx)=\exp\left(-\norm{\bfx}_D\right)$, $\bfx\le\bfzero\in\R^d$. Then we have for $\bfx\le\bfzero\in\R^d$
\begin{align*}
G(\bfx)&= \exp\left(-\norm{\bfx}_D\right)\\
&=P(\eta_i\le x_i,\,1\le i\le d)\\
&= P\left(\eta_{k^*}\le \min_{i\in A_k} x_i,\,1\le k\le K;\,\eta_j\le x_j,\,j\in\left(\cup_{k=1}^K A_k\right)^\complement\right),
\end{align*}
where $k^*\in A_k$ is for each $k\in\set{1,\dots,K}$ an arbitrary but fixed element of $A_k$. The rv $\bfeta^*$ with joint components $\eta_{k^*}$, $1\le k\le K$, and $\eta_j$, $j\in\left(\cup_{k=1}^K A_k\right)^\complement$, is a sms rv of dimension less than $d$, and $\bfeta^*$ has no pair of completely dependent components. The rv $\bfeta^*$ might be viewed as the rv $\bfeta$ after the completely dependent components have been removed. Its corresponding $D$-norm is, of course, still idempotent. From Proposition \ref{prop:idempotent_norms:general_case} we obtain its df, i.e.,
\begin{align*}
G(\bfx)&=\exp\left(-\sum_{k=1}^K\abs{\min_{i\in A_k} x_i} - \sum_{j\in \left(\cup_{k=1}^K A_k\right)^\complement} \abs{x_j}\right)\\
&=\exp\left(-\sum_{k=1}^K\max_{i\in A_k} \abs{x_i} - \sum_{j\in \left(\cup_{k=1}^K A_k\right)^\complement} \abs{x_j}\right),\qquad \bfx\le\bfzero\in\R^d,
\end{align*}
which is the first part of the assertion.

Take, on the other hand, a rv $U$ that is  on the set of integers $\set{k^*:\,1\le k\le K}\cup \left(\cup_{k=1}^K A_k\right)^\complement$ uniformly distributed. Put $m:= K + \abs{\left(\cup_{k=1}^K A_k\right)^\complement}$ and set for $i=1,\dots,d$
\[
Z_i:=\begin{cases}
m,&i\in A_k,\\
0&\mathrm{otherwise},
\end{cases}
\]
if $U=k^*$, $1\le k\le K$, and
\[
Z_i:=\begin{cases}
m,&i=j,\\
0&\mathrm{otherwise},
\end{cases}
\]
if $U=j\in\left(\cup_{k=1}^K A_k\right)^\complement$. Then $E(Z_i)=1$, $1\le i\le d$, and
\begin{align*}
&E\left(\max_{1\le i\le d}\left(\abs{x_i}Z_i\right)\right)\\
&= \sum_{j\in \set{k^*:\,1\le k\le K}\cup \left(\cup_{k=1}^K A_k\right)^\complement} E\left(\max_{1\le i\le d}\left(\abs{x_i}Z_i\right)1(U=j)\right)\\
&=\sum_{k=1}^K \max_{i\in A_k}\abs{x_i} + \sum_{j\in \left(\cup_{k=1}^K A_k\right)^\complement} \abs{x_j},\qquad \bfx\in\R^d.
\end{align*}
It is easy to see that this $D$-norm is idempotent, which completes the proof.
\end{proof}

The set of all idempotent trivariate $D$-norms is, for example, given by
\[
\norm{(x,y,z)}_D =\begin{cases}
\max(\abs x, \abs y, \abs z)\\
\max(\abs x, \abs y) + \abs z\\
\max(\abs x, \abs z) + \abs y\\
\max(\abs y, \abs z) + \abs x\\
\abs x + \abs y + \abs z
\end{cases},
\]
where the three mixed versions are just permutations of the arguments and might be viewed as equivalent.

\section{Tracks of $D$-Norms}\label{sec:tracks_of_D-norms}

The multiplication of $D$-norms $D^{(1)}, D^{(2)},\dots$ on $\R^d$ can obviously be iterated:
\[
\norm\cdot_{\prod_{i=1}^{n+1}D^{(i)}} := \norm\cdot_{D^{(n+1)}\prod_{i=1}^n D^{(i)}}, \qquad n\in\N.
\]
This operation is commutative as well. In this section we investigate such \emph{$D$-norm tracks} $\norm\cdot_{\prod_{i=1}^n D^{(i)}}$, $n\in\N$. We will in particular show that each track converges to an idempotent $D$-norm if $\norm\cdot_{D^{(i)}}=\norm\cdot_D$, $i\in\N$, for an arbitrary $D$-norm $D$ on $\R^d$.

We start by establishing several auxiliary results. The first one indicates in particular that multiplication of $D$-norms decreases the dependence among the components of the corresponding sms rv.

\begin{lemma}\label{lem:multiplication_of_D-norms_is_monotone}
We have for arbitrary $D$-norms $\norm\cdot_{D^{(1)}}$, $\norm\cdot_{D^{(2)}}$ on $\R^d$
\[
\norm\cdot_{D^{(1)}D^{(2)}} \ge \max\left(\norm\cdot_{D^{(1)}}, \norm\cdot_{D^{(2)}}\right).
\]
\end{lemma}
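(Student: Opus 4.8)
The strategy is to use the conditioning technique from \eqref{eqn:conditioning_technique}, realizing the product $D$-norm as an integral of the $D$-norm $\norm\cdot_{D^{(1)}}$ evaluated at rescaled arguments, and then to exploit the homogeneity of a $D$-norm together with the fact that a generator has mean one in each coordinate. Fix $\bfx\in\R^d$ and let $\bfZ^{(1)},\bfZ^{(2)}\in\mathcal Z$ be independent generators of $\norm\cdot_{D^{(1)}}$, $\norm\cdot_{D^{(2)}}$ respectively. Conditioning on $\bfZ^{(2)}=\bfz^{(2)}$ gives
\[
\norm{\bfx}_{D^{(1)}D^{(2)}} = E\left(\norm{\bfx\bfZ^{(1)}\bfZ^{(2)}}_\infty\right)
= \int E\left(\norm{(\bfx\bfz^{(2)})\bfZ^{(1)}}_\infty\right)\,\left(P*\bfZ^{(2)}\right)(d\bfz^{(2)})
= \int \norm{\bfx\bfz^{(2)}}_{D^{(1)}}\,\left(P*\bfZ^{(2)}\right)(d\bfz^{(2)}).
\]

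First I would bound $\norm\cdot_{D^{(1)}D^{(2)}}\ge\norm\cdot_{D^{(1)}}$. Since $\norm\cdot_{D^{(1)}}\ge\norm\cdot_\infty$ for every $D$-norm, the integrand satisfies $\norm{\bfx\bfz^{(2)}}_{D^{(1)}}\ge\norm{\bfx\bfz^{(2)}}_\infty=\max_i(\abs{x_i}z_i^{(2)})$, so
\[
\norm{\bfx}_{D^{(1)}D^{(2)}}\ge \int \max_{1\le i\le d}\left(\abs{x_i}z_i^{(2)}\right)\,\left(P*\bfZ^{(2)}\right)(d\bfz^{(2)}) = E\left(\norm{\bfx\bfZ^{(2)}}_\infty\right)=\norm{\bfx}_{D^{(2)}}.
\]
Hmm — this actually produces the bound $\norm\cdot_{D^{(1)}D^{(2)}}\ge\norm\cdot_{D^{(2)}}$, not $\ge\norm\cdot_{D^{(1)}}$. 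To get the other one I would instead bound below using convexity: the map $\bfy\mapsto\norm{\bfy}_{D^{(1)}}$ is convex (being a norm), so by Jensen's inequality applied to the integral over $\bfZ^{(2)}$,
\[
\norm{\bfx}_{D^{(1)}D^{(2)}} = E\left(\norm{\bfx\bfZ^{(2)}}_{D^{(1)}}\right)\ge \norm{E\left(\bfx\bfZ^{(2)}\right)}_{D^{(1)}} = \norm{\bfx}_{D^{(1)}},
\]
the last equality because $E(\bfx\bfZ^{(2)}) = (x_1 E(Z_1^{(2)}),\dots) = \bfx$. By the symmetry $\norm\cdot_{D^{(1)}D^{(2)}}=\norm\cdot_{D^{(2)}D^{(1)}}$ (commutativity, already noted in the excerpt), the same argument with the roles of the two $D$-norms exchanged yields $\norm\cdot_{D^{(1)}D^{(2)}}\ge\norm\cdot_{D^{(2)}}$, and taking the maximum of the two bounds finishes the proof.

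The only subtlety worth checking is the Jensen step: $\bfZ^{(2)}$ need not be bounded under the extended definition of $\mathcal Z$, but each coordinate is integrable with $E(Z_i^{(2)})=1$, so $E(\bfx\bfZ^{(2)})$ is well-defined and equals $\bfx$, and Jensen's inequality for the convex function $\norm\cdot_{D^{(1)}}$ applies to any integrable random vector. There is no real obstacle here; the proof is two lines once one notices that multiplying by a mean-one generator and then applying a convex function can only increase the expectation, by Jensen. I would present it in exactly that order: recall the conditioning identity, apply Jensen, invoke commutativity, conclude.
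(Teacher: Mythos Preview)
Your proof is correct and follows essentially the same route as the paper: conditioning to write $\norm{\bfx}_{D^{(1)}D^{(2)}}=E\bigl(\norm{\bfx\bfZ^{(2)}}_{D^{(1)}}\bigr)$, applying Jensen's inequality to the convex map $\bfy\mapsto\norm{\bfy}_{D^{(1)}}$, and then swapping the roles of the two generators. Your initial detour via $\norm\cdot_{D^{(1)}}\ge\norm\cdot_\infty$ is a valid shortcut for the bound $\norm\cdot_{D^{(1)}D^{(2)}}\ge\norm\cdot_{D^{(2)}}$ that the paper does not use, but the core argument is identical.
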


\begin{proof}
 Let $\bfZ^{(1)}$, $\bfZ^{(2)}$ be independent generators of $\norm\cdot_{D^{(1)}}$, $\norm\cdot_{D^{(2)}}$.  We have for $\bfx\in\R^d$ by conditioning on $\bfZ^{(2)}$ as in equation \eqref{eqn:conditioning_technique}
 \begin{equation}\label{eqn:first_step_for_Jensen's_inequality}
 \norm{\bfx}_{D^{(1)}D^{(2)}} =E\left(\norm{\bfx\bfZ^{(1)}\bfZ^{(2)}}_\infty\right) = E\left(\norm{\bfx\bfZ^{(2)}}_{D^{(1)}}\right).
 \end{equation}
 Note that
 \begin{equation}\label{eqn:second_step_for_Jensen's_inequality}
 \norm{\bfx}_{D^{(1)}} = \norm{\bfx E\left(\bfZ^{(2)}\right)}_{D^{(1)}} =  \norm{ E\left(\bfx \bfZ^{(2)}\right)}_{D^{(1)}}.
 \end{equation}

 Put
 \[
 T(\bfx):= \norm{\bfx}_{D^{(1)}},\qquad \bfx\in\R^d.
 \]
 Then $T$ is a convex function by the triangle inequality and the homogeneity satisfied by any norm. We, thus, obtain from Jensen's together with equations \eqref{eqn:first_step_for_Jensen's_inequality} and \eqref{eqn:second_step_for_Jensen's_inequality}
 \begin{align*}
\norm{\bfx}_{D^{(1)}D^{(2)}}&=  E\left(\norm{\bfx\bfZ^{(2)}}_{D^{(1)}}\right)\\
&= E\left(T\left(\bfx \bfZ^{(2)}\right)\right)\\
&\ge T\left(E\left(\bfx\bfZ^{(2)}\right)\right)\\
&= \norm{ E\left(\bfx \bfZ^{(2)}\right)}_{D^{(1)}}\\
&= \norm{\bfx}_{D^{(1)}}.
 \end{align*}
 Exchanging $\bfZ^{(1)}$ and $\bfZ^{(2)}$ completes the proof.
 \end{proof}

 \begin{prop}\label{prop:the_limit_of_a_track_exists}
 Let $\norm\cdot_{D^{(n)}}$, $n\in\N$, be a set of arbitrary $D$-norms on $\R^d$. Then the limit of the track
 \[
 \lim_{n\to\infty} \norm{\bfx}_{\prod_{i=1}^n D^{(i)}}=: f(x)
 \]
 exists for each $\bfx\in\R^d$ and is a D-norm, i.e., $f(\cdot)=\norm\cdot_D$.
 \end{prop}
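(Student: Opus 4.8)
The plan is to fix $\bfx\in\R^d$ and show that the sequence $a_n := \norm{\bfx}_{\prod_{i=1}^n D^{(i)}}$ converges; the characterization of the limit as a $D$-norm then follows by a continuity/closedness argument. First I would observe that by Lemma \ref{lem:multiplication_of_D-norms_is_monotone}, applied with $\norm\cdot_{D^{(1)}}$ replaced by $\norm\cdot_{\prod_{i=1}^n D^{(i)}}$ and $\norm\cdot_{D^{(2)}}$ replaced by $\norm\cdot_{D^{(n+1)}}$, we get
\[
\norm{\bfx}_{\prod_{i=1}^{n+1} D^{(i)}} = \norm{\bfx}_{\left(\prod_{i=1}^n D^{(i)}\right)D^{(n+1)}} \ge \norm{\bfx}_{\prod_{i=1}^n D^{(i)}},
\]
so $(a_n)_{n\in\N}$ is nondecreasing. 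Since every $D$-norm is dominated by $\norm\cdot_1$, the sequence is bounded above by $\norm{\bfx}_1$. Hence $a_n$ converges to a finite limit $f(\bfx)\le\norm{\bfx}_1$.

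Next I would identify $f$ as a $D$-norm. One clean route: for each $n$, $G_n(\bfx):=\exp\bigl(-\norm{\bfx}_{\prod_{i=1}^n D^{(i)}}\bigr)$ is a sms df by the first Theorem of the paper, and $G_n(\bfx)\to\exp(-f(\bfx))=:G(\bfx)$ pointwise on $(-\infty,0]^d$. The limit $f$ is a norm on $\R^d$: it is nonnegative, positively homogeneous (each $a_n$ is, being a norm evaluated at $\bfx$, and homogeneity passes to pointwise limits), satisfies the triangle inequality (again a pointwise limit of triangle inequalities), and is positive for $\bfx\neq\bfzero$ because $f(\bfx)\ge\norm{\bfx}_\infty>0$ by the lower bound $a_n\ge\norm\cdot_\infty$ (which also comes from Lemma \ref{lem:multiplication_of_D-norms_is_monotone} together with $\norm\cdot_{D^{(i)}}\ge\norm\cdot_\infty$, or simply from $\norm\cdot_D\ge\norm\cdot_\infty$ for every $D$-norm). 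Thus $G$ is continuous, with $G(\bfzero)=1$ and $G(\bfx)\to 0$ as $\bfx\to-\bfinfty$; being a pointwise limit of df it is itself a df, and its max-stability $G(\bfx/n)^n=G(\bfx)$ is inherited from the $G_n$ (equivalently, directly from the homogeneity of $f$). By the first Theorem, $G(\bfx)=\exp(-\norm{\bfx}_D)$ for some $D$-norm $\norm\cdot_D$, i.e. $f(\cdot)=\norm\cdot_D$, as claimed.

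The only genuinely delicate point is the justification that $f$ is a \emph{norm} in the strict sense — in particular positive definiteness and the finiteness of the triangle-inequality limit — but both are immediate from the uniform two-sided bound $\norm\cdot_\infty\le a_n\le\norm\cdot_1$ that Lemma \ref{lem:multiplication_of_D-norms_is_monotone} supplies for every $n$. An alternative, slightly more self-contained finish that avoids invoking the df characterization is to note that the set of $D$-norms on $\R^d$ is closed under pointwise convergence (a pointwise limit of functions of the form $E(\norm{\bfx\bfZ^{(n)}}_\infty)$ that stays sandwiched between $\norm\cdot_\infty$ and $\norm\cdot_1$ is again of that form, e.g. via a tightness/weak-limit argument for the generators $\bfZ^{(n)}$, or simply via the Theorem as above); I would use whichever is cleaner in context, but the Theorem-based argument is shortest and requires nothing beyond what is already in the paper.
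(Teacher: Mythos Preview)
Your proof is correct and follows essentially the same route as the paper: Lemma \ref{lem:multiplication_of_D-norms_is_monotone} gives monotonicity of $a_n$, the bound $\norm\cdot_D\le\norm\cdot_1$ gives convergence, and the norm properties of $f$ pass to the limit. The only difference is in the last step. The paper concludes that $f$ is a $D$-norm by directly invoking Hofmann's characterization of $D$-norms (\citet{hofm09}, see \citet[Theorem 4.4.2]{fahure10}), whereas you argue that $G=\exp(-f)$ is a continuous pointwise limit of sms df's and hence itself a sms df, then apply Theorem~1.1 --- exactly the device used in the proof of Lemma \ref{lem:generator:dropping_the_boundedness}. Your route has the virtue of being entirely self-contained within the paper; the paper's route is a one-line citation.
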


 \begin{proof}
  From Lemma \ref{lem:multiplication_of_D-norms_is_monotone} we know that for each $\bfx\in\R^d$ and each $n\in\N$
  \[
  \norm{\bfx}_{\prod_{i=1}^n D^{(i)}} \le \norm{\bfx}_{\prod_{i=1}^{n+1} D^{(i)}}.
  \]
  As each $D$-norm is bounded by the $L_1$-norm, i.e., $\norm{\bfx}_{\prod_{i=1}^n D^{(i)}}\le \norm{\bfx}_1$, the sequence $\norm{\bfx}_{\prod_{i=1}^n D^{(i)}}$, $n\in\N$, is monotone increasing and bounded and, thus, the limit
  \[
 \lim_{n\to\infty} \norm{\bfx}_{\prod_{i=1}^n D^{(i)}}=: f(x)
 \]
 exists in $[0,\infty)$. The triangle inequality and the homogeneity of $f(\cdot)$ are obvious. The monotonicity of the sequence $ \lim_{n\to\infty} \norm{\bfx}_{\prod_{i=1}^n D^{(i)}}$ implies that $f(\bfx)=0$ $\iff$ $\bfx=\bfzero$ and, thus, $f(\cdot)$ is a norm on $\R^d$. The characterization of a $D$-norm as established by \citet{hofm09} (see \citet[Theorem 4.4.2]{fahure10}) implies that $f(\cdot)$ is a $D$-norm as well.
   \end{proof}

If we set $D^{(n)}$ for each $n\in\N$ equal to a fixed but arbitrary $D$-norm, then the limit  in Proposition \ref{prop:the_limit_of_a_track_exists} is an idempotent $D$-norm.

\begin{theorem}\label{thm:limit_of_track_is_idempotent}
Let $\norm\cdot_D$ be an arbitrary $D$-norm on $\R^d$. Then the limit
\[
 \lim_{n\to\infty} \norm{\bfx}_{\prod_{i=1}^n D^{(i)}}=:\norm{\bfx}_{D^*},\qquad \bfx\in\R^d,
\]
is an idempotent $D$-norm on $\R^d$.
\end{theorem}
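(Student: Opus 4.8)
The plan is to show that $\norm\cdot_{D^*}$ is a fixed point of the multiplication with $\norm\cdot_D$, i.e. $\norm\cdot_{D^*D}=\norm\cdot_{D^*}$; idempotence then follows because, by commutativity and associativity of the iterated multiplication, multiplying $\norm\cdot_{D^*}$ with itself can be realized as a limit of the same track. Concretely, write $\norm\cdot_{D_n}$ for the $n$-th member of the track $\prod_{i=1}^n D^{(i)}$ with $D^{(i)}=D$, so that $\norm\cdot_{D_{n+1}}=\norm\cdot_{D_n D}$, and $\norm\cdot_{D_n}\uparrow\norm\cdot_{D^*}$ pointwise by Proposition \ref{prop:the_limit_of_a_track_exists}. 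The first step is to pass to the limit in the recursion $\norm\cdot_{D_{n+1}}=\norm\cdot_{D_n D}$ and deduce $\norm\cdot_{D^*D}=\norm\cdot_{D^*}$.

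For that limiting step I would use the conditioning representation from equation \eqref{eqn:first_step_for_Jensen's_inequality}: if $\bfZ$ is a fixed generator of $\norm\cdot_D$ and $\bfW_n$ an independent generator of $\norm\cdot_{D_n}$, then $\norm{\bfx}_{D_n D}=E\big(\norm{\bfx\bfZ}_{D_n}\big)$, where the expectation is over $\bfZ$ only. Since $\norm\cdot_{D_n}\le\norm\cdot_{D_{n+1}}\le\norm\cdot_1$ and these converge pointwise to $\norm\cdot_{D^*}$, for each fixed realization $\bfz$ we have $\norm{\bfx\bfz}_{D_n}\uparrow\norm{\bfx\bfz}_{D^*}$, dominated by $\norm{\bfx\bfz}_1=\sum_i\abs{x_i}z_i$, which is integrable because $E(Z_i)=1$. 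By monotone (or dominated) convergence,
\[
\norm{\bfx}_{D^*} = \lim_{n\to\infty}\norm{\bfx}_{D_{n+1}} = \lim_{n\to\infty} E\big(\norm{\bfx\bfZ}_{D_n}\big) = E\big(\norm{\bfx\bfZ}_{D^*}\big) = \norm{\bfx}_{D^*D},
\]
so $\norm\cdot_{D^*}$ is fixed under multiplication by $\norm\cdot_D$.

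It remains to upgrade "$\norm\cdot_{D^*}$ is fixed by $\norm\cdot_D$" to "$\norm\cdot_{D^*}$ is idempotent", i.e. $\norm\cdot_{(D^*)^2}=\norm\cdot_{D^*}$. Here I would argue by induction that $\norm\cdot_{D^* D_n}=\norm\cdot_{D^*}$ for all $n$: the base case $n=1$ is what we just proved, and the inductive step uses commutativity/associativity, $\norm\cdot_{D^* D_{n+1}}=\norm\cdot_{(D^*D)D_n}=\norm\cdot_{D^* D_n}=\norm\cdot_{D^*}$. Then, repeating the conditioning/limiting argument above but now with the roles reversed — take $\bfZ^*$ a generator of $\norm\cdot_{D^*}$ and $\bfW_n$ an independent generator of $\norm\cdot_{D_n}$, so $\norm{\bfx}_{D^* D_n}=E\big(\norm{\bfx\bfZ^*}_{D_n}\big)$ — and letting $n\to\infty$ under the same integrable domination $\norm{\bfx\bfZ^*}_1$, we get $\norm{\bfx}_{D^*}=\lim_n\norm{\bfx}_{D^*D_n}=E\big(\norm{\bfx\bfZ^*}_{D^*}\big)=\norm{\bfx}_{(D^*)^2}$, which is idempotence. (Alternatively one can phrase the whole thing in terms of the limiting random-vector/Poisson-process picture of Remark \ref{rem:generation_of_sms_rv}, but the analytic route above is cleaner.)

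The main obstacle is the interchange of limit and expectation in the conditioning identity $\norm{\bfx}_{D_n D}=E\big(\norm{\bfx\bfZ}_{D_n}\big)$ — one must be careful that $\norm\cdot_{D_n}$ genuinely increases pointwise to $\norm\cdot_{D^*}$ (this is exactly Lemma \ref{lem:multiplication_of_D-norms_is_monotone} combined with Proposition \ref{prop:the_limit_of_a_track_exists}) and that the dominating function $\norm{\bfx\bfZ}_1$ is integrable, which hinges on $E(Z_i)=1<\infty$. Everything else (that the limit is a bona fide $D$-norm, commutativity of the iterated product) has already been supplied by the preceding results.
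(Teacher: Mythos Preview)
Your proof is correct and follows essentially the same route as the paper's: both arguments use the conditioning identity \eqref{eqn:first_step_for_Jensen's_inequality} together with the monotone convergence $\norm\cdot_{D_n}\uparrow\norm\cdot_{D^*}$ to first establish $\norm\cdot_{D^*D^k}=\norm\cdot_{D^*}$ for every $k$, and then repeat the same limiting step (conditioning on a generator $\bfZ^*$ of $\norm\cdot_{D^*}$ and letting $k\to\infty$) to obtain $\norm\cdot_{(D^*)^2}=\norm\cdot_{D^*}$. The only cosmetic difference is that the paper conditions on the first $k$ factors at once to get $\norm\cdot_{D^*D^k}=\norm\cdot_{D^*}$ directly, whereas you first prove the case $k=1$ and then induct via associativity; both are fine.
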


\begin{proof}
We know from Poposition \ref{prop:the_limit_of_a_track_exists} that $\norm\cdot_{D^*}$ is a $D$-norm on $\R^d$. Let $\bfZ^*$ be a generator of this $D$-norm and let $\bfZ^{(1)},\bfZ^{(2)},\dots$ be independent copies of the generator $\bfZ$ of $\norm\cdot_D$, independent of $\bfZ^*$ as well. Then we have for each $\bfx\in\R^d$
\[
\norm{\bfx}_{D^n}=E\left(\norm{\bfx \prod_{i=1}^n \bfZ^{(i)}}_\infty\right) \uparrow_{n\to\infty} \norm{\bfx}_{D^*}
\]
by Lemma \ref{lem:multiplication_of_D-norms_is_monotone}, as well as for each $k\in\N$
\begin{align*}
&\norm{\bfx}_{D^n}\\
&=E\left(\norm{\bfx \prod_{i=1}^k \bfZ^{(i)}  \prod_{j=k+1}^n \bfZ^{(j)}}_\infty\right)\\
&= \int E\left(\norm{\bfx \prod_{i=1}^k \bfz^{(i)}  \prod_{j=k+1}^n \bfZ^{(j)}}_\infty\right)\left(P*\left(\bfZ^{(1)},\dots,\bfZ^{(k)}\right)\right) \left(d\left(\bfz^{(1)},\dots,\bfz^{(k)}\right)\right)\\
&\to_{n\to\infty} \int \norm{\bfx \prod_{i=1}^k \bfz^{(i)}}_{D^*} \left(P*\left(\bfZ^{(1)},\dots,\bfZ^{(k)}\right)\right) \left(d\left(\bfz^{(1)},\dots,\bfz^{(k)}\right)\right)\\
&= E\left(\norm{\bfx\bfZ^* \prod_{i=1}^k \bfZ^{(i)}}_\infty\right)
\end{align*}
by the monotone convergence theorem. We, thus, have
\[
\norm{\bfx}_{D^*} =  E\left(\norm{\bfx\bfZ^* \prod_{i=1}^k \bfZ^{(i)}}_\infty\right)
\]
for each $k\in\N$. By letting $k$ tend to infinity and repeating the above arguments we obtain
\[
\norm{\bfx}_{D^*} =  E\left(\norm{\bfx\bfZ^* \prod_{i=1}^k \bfZ^{(i)}}_\infty\right)\uparrow_{k\to\infty} E\left(\norm{\bfx\bfZ^*}_{D^*}\right)= \norm{\bfx}_{D^*D^*},
\]
which completes the proof.
\end{proof}

If the initial $D$-norm $\norm\cdot_D$ has no complete dependence structure among its margins, i.e., if its CDF is empty, then the limiting $D$-norm in Theorem \ref{thm:limit_of_track_is_idempotent} is the $L_1$-norm. Otherwise, the limit has the same CDF as $\norm\cdot_D$.

The limit of an arbitrary track $\norm\cdot_{\prod_{i=1}^n D^{(i)}}$, $n\in\N$, is not necessarily idempotent. Take, for example, an arbitrary and non idempotent $D$-norm $\norm\cdot_D^{(1)}$ and $\norm\cdot_D^{(i)}=\norm\cdot_\infty$, $i\ge 2$. But it is an open problem, whether the limit of a track is again idempotent if $\norm\cdot_{D^{(i)}}\not=\norm\cdot_\infty$ for infinitely many $i\in\N$.

The following corollary is a consequence of the preceding results. Recall that $2\bfU$ is the generator of a $D$-norm if the rv $\bfU$ follows a copula.

\begin{cor}\label{cor:limit_of_copula_track}
Let $\bfU^{(1)}, \bfU^{(2)},\dots$ be independent copies of the rv $\bfU$ that follows an arbitrary copula on $\R^d$. Suppose that no pair $U_i,U_j$, $i\not=j$, of the components of $\bfU=(U_1,\dots,U_d)$ satisfies $U_i=U_j$ a.s. Then
\[
\lim_{n\to\infty}2^n E\left(\max_{1\le j\le d}\left(\abs{x_j} \prod_{i=1}^n U_j^{(i)}\right)\right)= \sum_{i=1}^n\abs{x_i}, \qquad\bfx\in\R^d.
\]
With $\bfx=(1,\dots,1)$ we obtain
\[
\lim_{n\to\infty} 2^nE\left(\max_{1\le j\le d} \prod_{i=1}^n U_j^{(i)}\right)=d.
\]
\end{cor}

\section*{Acknowledgements}
The author is indebted to an anonymous reviewer for bringing to his attention the papers by \citet{wansto10a} and \citet{molss14} as well as for pointing out that the conditions in an earlier version of Lemma \ref{lem:characterization_of_E(X+Y)=E(X)} can be weakened considerably.


\end{document}